\newtheorem{thm}{Theorem}[section]
\newtheorem{proposition}{Propostion}[section]
\begin{document}

\newcommand{\ct}{\cite}
\newcommand{\pr}{\protect\ref}
\newcommand{\su}{\subseteq}
\newcommand{\pa}{{\partial}}

\newcommand{\G}{{\mathbb G}}
\newcommand{\Z}{{\mathbb Z}}
\newcommand{\R}{{\mathbb R}}
\newcommand{\I}{{\mathrm{Id}}}
\newcommand{\lk}{{\mathcal{L}}}
\newcommand{\dg}{{\mathcal{D}}}

\newcounter{numb}

\title{Invariants of Knot Diagrams}
\author{Joel Hass\footnote {Supported in part by NSF grant DMS 3289292.}  ~and Tahl Nowik}

\date{\today}
 
\maketitle
\begin{abstract}
We construct a new order 1 invariant for knot diagrams.  
We use it to determine the minimal number of Reidemeister moves needed
to pass between certain pairs of knot diagrams. 
\end{abstract}

\section{Introduction}\label{A}
Oriented knots in $\R^3$ are usually represented by knot diagrams.  
Projecting a knot to a plane or 2-sphere in a generic direction
gives an immersed oriented planar or spherical curve with finitely many double points, or crossings.
A knot diagram is obtained by marking a neighborhood of each crossing to indicate which strand lies
above the other.  The higher strand is called the overcrossing and the lower one the undercrossing. Starting with a 
knot diagram, one can recover the original knot up to isotopy by constructing a curve with the overcrossing arcs pushed 
slightly above the plane of the diagram.

A central issue is to determine whether two knot diagrams represent the same knot, i.e.
whether the curves in $\R^3$ corresponding to each diagram are isotopic. If they represent the 
same knot we say that the two diagrams are {\em equivalent}.
Alexander and Briggs~\cite{AleBri26} and independently Reidemeister~\cite{R1} showed that equivalent 
diagrams can be connected through isotopy and  a series of three types of moves, usually referred to as  
Reidemeister moves. The number of such moves required to connect two equivalent diagrams, is difficult to  estimate. 
An exponential upper bound for the number of Reidemeister moves required to connect two equivalent diagrams 
is obtained in~\cite{HasLagPip}.  We can get some lower bounds by looking at crossing numbers, writhes and  winding numbers of diagrams since each Reidemeister move changes these numbers by 0, 1 or 2.  Less  obvious bounds are obtained in   \cite{Carter} and  \cite{Hayashi}.
 
In this paper we define a new family of knot diagram invariants, and focus on one of them in particular. 
We found these invariants by following the program of Arnold and 
Vassiliev for finite order invariants.  Our invariants are of order one.

As an application, for each $n$ we present two diagrams $D_n,E_n$ for the unknot, 
each with $2n+1$ crossings. For these two diagrams, which are almost identical, 
the writhe, cowrithe, crossing number and winding number
give a lower bound of 2 for the number of  Reidemeister moves required to pass from one to the other.  
Using our new invariant, 
we show that the minimal number of Reidemeister moves required to pass from $D_n$ to $E_n$ is  $2n+2$.
We also obtain restrictions on which Reidemeister moves may appear in any sequence of Reidemeister moves which realizes 
this minimum.

Our invariant takes values in a very large abelian group. It is natural to investigate $\Z$ valued invariants
obtained by composing it with homomorphisms into $\Z$. The ``cowrithe'' introduced in \cite{Hayashi} 
is obtained in this way. We obtain a relation of the cowrithe to Arnold's spherical curve invariants and the 
Alexander-Conway polynomial, which clarifies the limitations of the cowrithe for studying Reidemeister moves.

\section{The invariant}\label{B}

In what follows we consider two different types of geometric objects.
The first objects, which are the subject of study in this paper, are knot diagrams in $S^2$.
Two such diagrams are considered the same if they differ by an ambient isotopy of $S^2$. 
We denote the set of all such diagrams by $\dg$. Our goal is to construct invariants of 
knot diagrams. Towards that end we construct from a diagram a second geometric object, namely a two 
component link in $\R^3$.
This is a smooth embedding of $S^1 \coprod S^1$ in $\R^3$.
Two such embeddings are considered the same if they differ by an ambient isotopy 
of $\R^3$. 
We denote the set of all two component links by $\lk$, and the term \emph{links}
in this paper always refers to two component links.

Our basic construction relating knot diagrams to links is the following.
Given a knot diagram $D \in \dg$ and a crossing $a$ in $D$, define the smoothing $D^a \in \lk$,  to be the link obtained 
by smoothing the crossing $a$, i.e. performing a cut and paste on the four strands at the crossing that preserves the 
orientation of the arcs. The smoothing operation is independent of the orientation of the curve, 
since reversing orientation 
results in a change of orientation of both strands at the crossing. The diagram resulting from the smoothing is the 
diagram of an oriented 2-component link in $\R^3$, the link $D^a$. 
We order the components so that the first $S^1$ in $S^1 \coprod S^1$
is the component that enters a neighborhood of the crossing along the overcrossing arc and leaves it along  the undercrossing arc, see Figure~\ref{smoothing}. We define the sign of a crossing in a diagram in the usual way using the right-hand rule, so 
that each crossing in Figure~\ref{smoothing} is negative. 

\begin{figure}[htbp] 
   \centering
\includegraphics[width=3in] {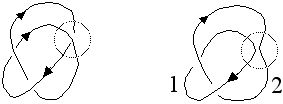}
  \caption{A smoothing results in a 2-component link}
 \label{smoothing}
\end{figure}

We now show how each invariant of 2-component links  gives rise to an invariant of knot diagrams.
Given a knot diagram $D$, denote by $D_+$ the set of all positive
crossings in $D$ and by   $D_-$ the set of all negative crossings.
Given an invariant $\phi:\lk \to S$ where $S$ is any set,
let $\G_S$ denote the free abelian group with basis $\{X_s, Y_s\}_{s \in S}$. We then define 
the invariant $I_\phi : \dg \to \G_S$ as follows:  
$$I_\phi(D) = \sum_{a \in D_+} X_{\phi(D^a)}  + \sum_{a \in D_-} Y_{\phi(D^a)}.$$

A particularly interesting example occurs when $\phi$ is taken to be $lk:\lk\to\Z$, the linking number of the two 
components of a link in $\lk$.
The resulting invariant is applied in the next section.

We now compute how $I_\phi(D)$ changes under Reidemeister moves on $D$.
Reidemeister moves RI, RII, RIII, are illustrated in Figure~\ref{reidmoves}.

\begin{figure}[htbp] 
   \centering
  \includegraphics[width=3in]{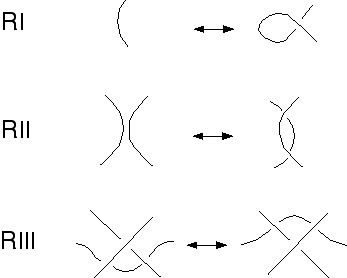} 
\caption{Reidemeister moves}
   \label{reidmoves}
\end{figure}

\noindent
RI: The contribution of all previously existing crossings is unchanged, and one new term is added.
The added term is $X_s$ if the new crossing created by the Reidemeister move is
positive and is $Y_s$ if the new crossing is negative, where $s$ is the value of $\phi$ on the link. 
See Figure~\ref{linkab}(a). 
In the case where $\phi=lk$, the added term is $X_0$ or $Y_0$, since the linking 
number of the smoothed link is $0$.

\noindent
RII: Again the contribution of all previously existing crossings is unchanged, 
but this time two new crossings are added.
There are two cases, depending on whether the orientations of the two strands participating in the 
Reidemeister move coincide or are opposite. 
We call these a \emph{matched} or \emph{unmatched} RII move, respectively.

For an unmatched RII move each of the two smoothings gives the same
link (see Figure~\ref{linkab}(b)), so the addition to the value of $I_\phi$ due to the Reidemeister move  
is of the form $X_s + Y_s$.
For $\phi=lk$ this gives  $X_n+Y_n$, where $n$ is the linking number of this
two-component link.

For a matched RII move, two different links appear from the two smoothings, differing from each other by one  
crossing change between the two components (see Figure~\ref{linkab}(c)). 
The addition to the value of $I_\phi$ is of the form $X_s+Y_t$ where 
$s$ and $t$ are the values of $\phi$ on these two links. When $\phi=lk$, the negative smoothing
produces a link with linking number greater by 1 than that produced by the positive smoothing, 
and so the added term is of the form $X_n+Y_{n+1}$.

\begin{figure}[htbp] 
   \centering
\includegraphics[width=4in]{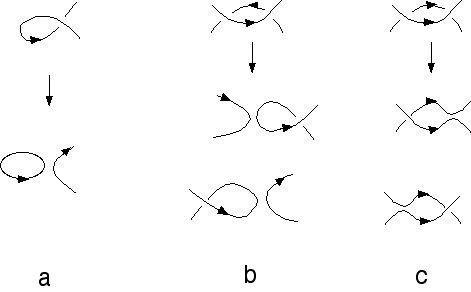} 
   \caption{Links arising from smoothing after RI and RII moves}
   \label{linkab}
\end{figure}

\noindent
RIII: To each crossing before the Reidemeister move there corresponds a crossing after the move.
For all crossings other than the three crossings participating in the move, the contribution to $I_\phi$
is clearly unchanged. 
As to these three crossings, the crossing between the top and middle strand also gives the same 
contribution before and after the move, since one can slide the bottom strand below the smoothed 
crossing, to show that the same link type is produced. An instance of this is demonstrated in 
Figure~\ref{link3} where diagrams $D,E$ are the diagrams before and after an RIII move. The crossing between the  
top and middle strands is marked $a$ and $D^a, E^a$ are seen to be 
the same link, via an isotopy corresponding to the original RIII move between $D$ and $E$.
By the same argument, the same is true for the crossing between the middle and bottom strands.

\begin{figure}[htbp] 
   \centering
\includegraphics[width=4in]{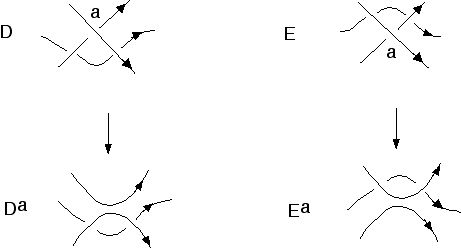}
   \caption{Links arising from smoothings of RIII moves involving the top and middle strand}
   \label{link3}.\end{figure}

The only contribution that  changes is that of the crossing between the 
top and bottom strands. If the crossing of the top and bottom strands is positive, then a term $X_s$ is
replaced by some term $X_t$, and so the change in the value of the invariant is of the form
$X_t - X_s$. In the same way,
if the crossing of the top and bottom strands is negative, the change is of the form $Y_t - Y_s$.
For the case $\phi =lk$,  one checks directly that the linking numbers of these two links
differ by precisely 1. Characteristic examples of the two basic cases appear in Figure~\ref{lktopbot}.  For the 
links $D^a, E^a$ in Figure~\ref{lktopbot}, three strands appear. The two that participated in the 
smoothing belong to different components, while the third strand (corresponding to the middle strand
in $D,E$) belongs to one or the other component. From this it is clear that $lk(D^a)$ and $lk(E^a)$ differ
by $\pm 1$. A similar analysis applies to the links $F^a, G^a$  in Figure~\ref{lktopbot}. 
It follows that the change in the value of $I_{lk}$ due to an RIII move 
is of the form $\pm(X_n - X_{n+1})$ or $\pm(Y_n - Y_{n+1})$. 
  
\begin{figure}[hbtp]
\centering
\begin{minipage}[c]{.49\textwidth}
\centering
\includegraphics[width=.95\textwidth]{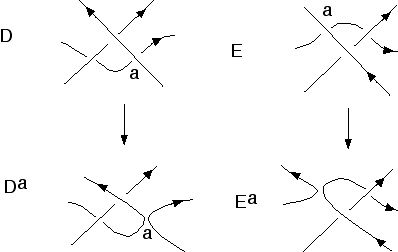}
\end{minipage}
\begin{minipage}[c]{.49\textwidth}
\centering
\includegraphics[width=.95\textwidth]{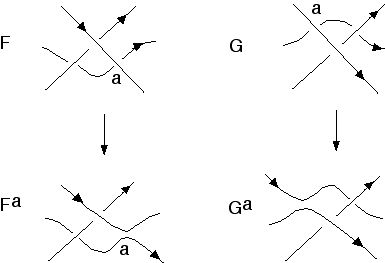}
\end{minipage}
\caption{The links arising from smoothings of RIII moves involving the top and bottom strand}
\label{lktopbot}
\end{figure}

We may use the above analysis to show that
for any $\phi$, the invariant $I_\phi$ is an order one invariant of knot diagrams.
Rather than giving a general definition of order $n$ invariants and then taking $n=1$,
we give the following equivalent definition. \\
{\bf Definition.} An invariant on knot diagrams has {\em order one} if 
whenever we may simultaneously perform two Reidemeister moves on a diagram,
in two disjoint discs $A,B \su S^2$ (that is, configurations as in Figure~\ref{reidmoves}  appear in $A$ and $B$), 
then the change in the invariant due to the move in $A$ is not 
affected by whether we first perform the move in $B$.

\begin{thm}\label{too}
For any link invariant $\phi:\lk \to S$, the invariant $I_\phi : \dg \to \G_S$ is an order one invariant.
\end{thm}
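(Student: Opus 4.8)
The plan is to exploit the fact that $I_\phi$ is assembled additively from the individual crossings of $D$, where the contribution of each crossing $a$ depends only on its sign and on the element $\phi(D^a)\in S$ determined by the smoothed link $D^a\in\lk$. Fix a diagram $D$ together with two disjoint discs $A,B\su S^2$ supporting Reidemeister configurations, and write $D_A$, $D_B$, $D_{AB}$ for the diagrams obtained from $D$ by performing the move in $A$, the move in $B$, and both moves, respectively. Unravelling the Definition, what must be shown is that the change produced by the $A$-move agrees before and after the $B$-move, i.e.
$$I_\phi(D_A) - I_\phi(D) = I_\phi(D_{AB}) - I_\phi(D_B).$$
For a crossing $a$ of a diagram I write $\mathrm{contr}(a)$ for its contribution to $I_\phi$, namely $X_{\phi(D^a)}$ if $a$ is positive and $Y_{\phi(D^a)}$ if $a$ is negative, so that $I_\phi$ is the sum of the $\mathrm{contr}(a)$ over all crossings.

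The key step, which does the real work, is the following local observation: if $a$ is a crossing lying outside a disc $B$ and $D'$ is obtained from $D$ by a Reidemeister move supported in $B$, then $\mathrm{contr}(a)$ is unchanged. Indeed, the move in $B$ does not meet $a$, so the sign of $a$ is preserved; and since smoothing at $a$ and altering the diagram inside $B$ take place in disjoint regions, the two operations commute, so the smoothed diagram $(D')^a$ is obtained from $D^a$ by carrying out that same Reidemeister move on the link diagram. Such a move is realized by an ambient isotopy of $\R^3$ preserving the two components together with their orientations and their ordering (the latter being pinned down by the untouched crossing $a$), whence $(D')^a$ and $D^a$ are the same element of $\lk$ and $\phi$ agrees on them. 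I expect this commuting of operations in disjoint regions, together with the verification that the induced operation on the smoothing is a genuine link isotopy respecting the ordered, oriented structure, to be the main point requiring care.

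Granting this, I would finish by partitioning the crossings of each of the four diagrams into those lying in $A$, those lying in $B$, and the rest, and letting $\Sigma_A(\cdot)$, $\Sigma_B(\cdot)$, $\Sigma_C(\cdot)$ denote the corresponding partial sums of $\mathrm{contr}$, so that $I_\phi=\Sigma_A+\Sigma_B+\Sigma_C$ on each diagram. The $B$-move is supported in $B$, so by the local observation every crossing outside $B$ contributes identically in $D$ and $D_B$; as all $A$- and $C$-crossings lie outside $B$ this gives $\Sigma_A(D)=\Sigma_A(D_B)$, and the same comparison of $D_A$ with $D_{AB}$ gives $\Sigma_A(D_A)=\Sigma_A(D_{AB})$. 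Symmetrically, applying the observation to the $A$-move shows that the $A$-move leaves the $B$- and $C$-contributions unchanged, so that $I_\phi(D_A)-I_\phi(D)=\Sigma_A(D_A)-\Sigma_A(D)$ and $I_\phi(D_{AB})-I_\phi(D_B)=\Sigma_A(D_{AB})-\Sigma_A(D_B)$. Substituting the two equalities among the $\Sigma_A$ terms then yields
$$I_\phi(D_A) - I_\phi(D) = \Sigma_A(D_A) - \Sigma_A(D) = \Sigma_A(D_{AB}) - \Sigma_A(D_B) = I_\phi(D_{AB}) - I_\phi(D_B),$$
which is exactly the desired identity, so $I_\phi$ has order one.
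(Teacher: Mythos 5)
Your proof is correct and rests on the same key observation as the paper's: a Reidemeister move performed in a disc disjoint from a crossing $a$ induces an isotopy of the smoothed link $D^a$, so it changes neither the sign of $a$ nor $\phi(D^a)$. The paper phrases this by saying the change due to the $A$-move is determined by the link types of the smoothings of crossings in $A$, which the $B$-move cannot alter; your partition into $\Sigma_A,\Sigma_B,\Sigma_C$ is just a more explicit bookkeeping of the same argument.
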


\begin{proof}
As seen in the analysis above, the change in the value of $I_{\phi}$ due to the Reidemeister move in $A$,
is determined by the link types obtained by smoothing either one or two crossings in $A$, 
before and/or after the move in $A$. The Reidemeister move performed in $B$, if done first,
does not affect the link type of these smoothed links, since a Reidemeister move in $B$ corresponds to an  isotopy on 
the smoothed links. 
\end{proof}

In the cases where $\phi$ is unchanged when permuting the two components of a link
and when reversing the orientation of a link, as is true for the invariant $lk$, then $I_\phi(D)$ is independent of 
the orientation of $D$. This is true since reversing the orientation of $D$ does not effect
the smoothing and the sign at each crossing. It only reverses the orientation of the 
smoothed links, and interchanges their 
two components. So, though the orientation of $D$ is used in the computation of $I_{lk}$, 
the invariant $I_{lk}$ is  an invariant of unoriented  diagrams.

The mirror image of a diagram $D$ is the diagram obtained from $D$ by reversing all of its crossings.
Taking the mirror image of a knot diagram $D$ has the following effect on $I_{lk}$. 
The smoothing at each crossing is the same, but the sign of all crossings is reversed. This interchanges
the $X$'s and $Y$'s, and reverses all linking numbers. So, the effect of taking mirror image is given
by mapping $X_n \mapsto Y_{-n}$ and  $Y_n \mapsto X_{-n}$. 
We also note that $I_{lk}$ is additive with respect to the operation of connected sum of diagrams.

\section{Application to knot diagrams}\label{C}

Let $R$ be the set of elements in $\G_\Z$ of the
form $X_0$, $Y_0$, $X_n+Y_n$, $X_n+Y_{n+1}$, $X_n - X_{n+1}$, $Y_n - Y_{n+1}$, and their negatives. 
By the analysis of Section \pr{B}, the elements of $R$ are precisely the elements of $\G_\Z$
that may appear as the change in the value of $I_{lk}(D)$ when performing a Reidemeister move on $D$.
Note that $R$ generates $\G_\Z$. The length of an element of $\G_\Z$ with respect to the generating 
set $R$ is called its {\em $R$-length}. Given two diagrams $D, E$ of the same knot, the $R$-length
of $I_{lk}(E) - I_{lk}(D)$ is a lower bound on the number of Reidemeister moves needed to get from $D$ to $E$. 
In particular, if $D$ is a diagram of the unknot, then the $R$-length of 
$I_{lk}(D)$ gives a lower bound on the number of Reidemeister
moves needed to get from $D$ to the trivial diagram.  
We point out that though we are working in the setting of spherical diagrams, 
the same lower bounds apply, a fortiori, to planar diagrams.

We  now use the above procedure to determine the minimal number of Reidemeister moves needed to pass between  certain pairs of diagrams of the unknot.

\begin{figure}[htbp] 
   \centering
\includegraphics[width=5in]{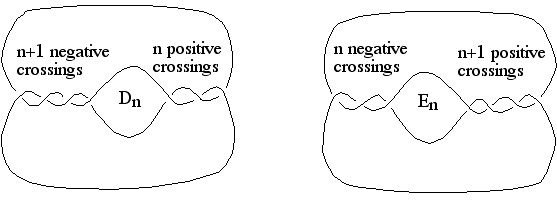} 
   \caption{An unknot $D_n$ and its mirror image $E_n$}
   \label{newDnEn}
\end{figure}

Let $D_n$ be the knot diagram appearing in Figure~\ref{newDnEn} (for $n=3$). It has $2n+1$ crossings, where the first
$n+1$ crossings, from left to right, are negative, and the following $n$ crossings are positive.
Let $E_n$ be the knot diagram obtained from $D_n$ by reversing the middle crossing, so that $E_n$ has from left to right, 
$n$ negative crossings followed by $n+1$ positive crossings.
Note that $E_n$ is a mirror image of $D_n$. 
The writhes of $D_n$ and $E_n$ differ by two. Their winding numbers, 
crossing numbers and cowrithes are the same 
(for definition of cowrithe see Section \pr{D}). So these invariants can only tell us that
at least two Reidemeister moves are needed to pass between these diagrams. 
It is easy to see how to arrive from $D_n$ to $E_n$ with $2n+2$ Reidemeister moves, namely, 
perform $n$ RII moves and one RI move to 
arrive from $D_n$ to the trivial diagram, and then one RI move and $n$ RII moves to create $E_n$.
We now prove that there is no shorter way:

\begin{thm}\label{DnEn}
Let $D_n, E_n$ be the two knot diagrams with $2n+1$ crossings appearing in Figure~\ref{newDnEn}. 
Then the minimal number of Reidemeister moves required to arrive from $D_n$ to $E_n$ is
$2n+2$. 
Furthermore, any sequence of Reidemeister moves realizing this minimum involves
precisely two RI moves.  The other $2n$ moves are of type corresponding 
to the following four elements of $R$:
$X_0+Y_1$, $-(X_{-1}+Y_0)$, $X_0 - X_{-1}$, $Y_1-Y_0$. 
In particular, no unmatched RII move may appear. 
\end{thm}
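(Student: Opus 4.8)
The plan is to combine the $R$-length lower bound from Section~\ref{C} with the explicit $2n+2$ move sequence already exhibited, so that the whole theorem reduces to showing that the $R$-length of $\Delta := I_{lk}(E_n) - I_{lk}(D_n)$ equals $2n+2$ and to analysing the equality case. The standard device for bounding an $R$-length from below is linear-programming duality: I would look for a group homomorphism $w:\G_\Z \to \R$ with $|w(g)| \le 1$ for every $g \in R$, since then any expression $\Delta = \sum_{i=1}^{L} \epsilon_i r_i$ (with $r_i \in R$, $\epsilon_i = \pm 1$) gives $|w(\Delta)| \le \sum_i |w(r_i)| \le L$, i.e. $L \ge |w(\Delta)|$. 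Everything then hinges on producing one $w$ with $w(\Delta) = 2n+2$ whose extremal generators are exactly the ones named in the statement.

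The first step is to compute $I_{lk}(D_n)$ from Figure~\ref{newDnEn}. The diagram $D_n$ consists of $n$ clasps together with one negative kink (the middle crossing). Smoothing the kink splits off a small unknotted circle and contributes $Y_0$; and I would check by direct inspection that each clasp is a matched pair whose positive crossing smooths to a link of linking number $-1$ and whose negative crossing smooths to a link of linking number $0$, contributing $X_{-1} + Y_0$. This yields $I_{lk}(D_n) = n\,X_{-1} + (n+1)\,Y_0$. Since $E_n$ is the mirror of $D_n$, the mirror rule $X_k \mapsto Y_{-k}$, $Y_k \mapsto X_{-k}$ gives $I_{lk}(E_n) = (n+1)\,X_0 + n\,Y_1$, and therefore
\[ \Delta = (n+1)\,X_0 + n\,Y_1 - n\,X_{-1} - (n+1)\,Y_0. \]

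Next I would write down the certificate $w$ explicitly by its values: take $w(X_k) = 0$ for $k \le -1$, $w(X_0) = 1$, $w(X_k) = \tfrac12$ for $k \ge 1$; and $w(Y_k) = -\tfrac12$ for $k \le -1$, $w(Y_0) = -1$, $w(Y_1) = 0$, $w(Y_k) = -\tfrac12$ for $k \ge 2$. A finite case check (the sequences are eventually constant) verifies $|w(g)| \le 1$ for all $g \in R$, with $w(g) = 1$ attained exactly on the six signed generators $X_0$, $-Y_0$, $X_0 + Y_1$, $-(X_{-1}+Y_0)$, $X_0 - X_{-1}$, $Y_1 - Y_0$, and with $|w(X_k + Y_k)| \le \tfrac12 < 1$ for every $k$. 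Plugging in, $w(\Delta) = (n+1) + 0 - 0 + (n+1) = 2n+2$, so at least $2n+2$ moves are needed; with the $2n+2$ move construction this pins the minimum at $2n+2$.

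Finally, for the rigidity I would argue on the equality case. In any sequence of exactly $2n+2$ moves realising $\Delta = \sum_{i=1}^{2n+2} \epsilon_i r_i$, the chain $2n+2 = w(\Delta) = \sum_i \epsilon_i w(r_i) \le \sum_i |w(r_i)| \le 2n+2$ forces $\epsilon_i w(r_i) = 1$ for every $i$, so each signed move-element $\epsilon_i r_i$ lies in the six-element extremal set. Its two RI members are $X_0$ (add a positive kink) and $-Y_0$ (remove a negative kink), each of which raises the writhe by $1$; since $\mathrm{writhe}(E_n) - \mathrm{writhe}(D_n) = 2$ and only RI moves change the writhe, exactly two moves are of RI type. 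The remaining $2n$ moves are then forced to be the four listed elements, and since every unmatched RII contributes some $\pm(X_k+Y_k)$ with $|w| < 1$, no unmatched RII can occur. The main obstacle is the construction of $w$: it must simultaneously certify the value $2n+2$ and have its extremal set be precisely these six generators, strictly separating off the unmatched RII generators — though the eventual constancy of $w$ reduces the verification of $|w(g)| \le 1$ over the infinite set $R$ to finitely many inequalities.
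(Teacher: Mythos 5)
Your proposal is correct, and the underlying method is the same as the paper's: exhibit homomorphisms out of $\G_\Z$ that are bounded on the generating set $R$ and evaluate large on $v=I_{lk}(E_n)-I_{lk}(D_n)$, then analyze the equality case. The difference is in how the certificates are packaged. The paper uses three integer-valued homomorphisms in sequence: $g$ (supported on $X_0,Y_0$) gives the lower bound $2n+2$; $f$ (the writhe) forces at least two terms $X_0$ or $-Y_0$; and $e$ (with $|e|\leq 2$ on $R$ and $e(v)=4n+2$) then pins down the remaining $2n$ terms by a counting argument that interleaves with the conclusion from $f$. You instead build a single half-integer-valued homomorphism $w$ with $|w|\leq 1$ on $R$ and $w(v)=2n+2$, whose extremal set is exactly the six signed generators named in the theorem; this makes the lower bound and the rigidity statement fall out of one tightness argument, after which the writhe is used only to split the six extremal types into two RI moves and $2n$ others. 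I checked your $w$ on all families in $R$: the values are $0$ or $\pm 1/2$ except at the six claimed extremal elements, and in particular $|w(X_k+Y_k)|\leq 1/2$ for all $k$, so the exclusion of unmatched RII moves is immediate. What your route buys is a cleaner logical structure (one certificate instead of the $g$/$e$ pair, and no need for the ``each contributes only 1 to $e$'' bookkeeping); what the paper's route buys is that each homomorphism is integer-valued and very simple to check, with $f$ having independent meaning as the writhe. Your computation of $I_{lk}(D_n)=nX_{-1}+(n+1)Y_0$ via the $n$ clasps and the middle kink, and of $I_{lk}(E_n)$ by the mirror rule $X_k\mapsto Y_{-k}$, $Y_k\mapsto X_{-k}$, agrees with the paper's direct computation.
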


\begin{proof}
By direct computation we see that
$$
I_{lk}(D_n) =(n+1)Y_0 + nX_{-1},
$$
and 
$$
I_{lk}(E_n)= (n+1)X_0 + nY_1,
$$
so
$$I_{lk}(E_n) - I_{lk}(D_n) = (n+1)X_0 + nY_1 - (n+1)Y_0 - nX_{-1}.$$ 
We denote this element by $v$.
We first show that the $R$-length of $v$ is $2n+2$. 
Let $g:\G_\Z \to\Z$ be the homomorphism defined by setting 
$g(X_0)=1$, $g(Y_0)=-1$, and $g(X_m)=g(Y_m)=0$ for all $m \neq 0$. 
Then $g(v)=2n+2$, and $|g(r)| \leq 1$ for any $r \in R$. 
It follows that the $R$-length of $v$ is at least $2n+2$.
The sequence of Reidemeister moves described above 
realizes this minimum, namely $v=-n(X_{-1}+Y_0)-Y_0+X_0+n(X_0+Y_1)$.

We now show that any presentation
of $v$ as a sum of $2n+2$ elements of $R$ involves precisely two terms of the form $X_0$ and/or $-Y_0$,
and all other $2n$ terms are of the four types appearing in the statement of the theorem.

Let $f:\G_\Z \to \Z$ be the homomorphism defined by $f(X_m)=1,f(Y_m)=-1$ for all $m$.
Then $f(X_0)=1,f(Y_0)=-1$ and the value of $f$ on all other elements of $R$ is 0.
Since $f(v)=2$ we see we need at least two terms of the form $X_0$ or $-Y_0$ to present $v$.
(This is simply a writhe argument, since $f \circ I_{lk}$ is the writhe.)

Now, let $e:\G_\Z \to\Z$ be the homomorphism defined by setting 
$e(X_0)=e(Y_1)=1$, $e(X_{-1})=e(Y_0)=-1$, and $e(X_m)=e(Y_m)=0$ for all other $X_m,Y_m$. 
Then $e(v)=4n+2$. We have $e(X_0)=1,e(Y_0)=-1$, and for all elements
of $R$, $|e(r)| \leq 2$, where the only elements in $R$ for which the value of $e$ is precisely 2
are $X_0+Y_1$, $-(X_{-1}+Y_0)$, $X_0 - X_{-1}$, $Y_1-Y_0$. 
We already know that at least two terms of the form $X_0, -Y_0$ appear in any presentation of $v$, 
each contributing only 1 to $e$, 
and so the other $2n$ terms of a minimal presentation
must each contribute $2$ in order to get to $4n+2$. 
So, the other $2n$ terms  must be of
the form $X_0+Y_1$, $-(X_{-1}+Y_0)$, $X_0 - X_{-1}$, $Y_1-Y_0$. 

As to the concluding remark, since none of these four elements are of the form $X_n + Y_n$,  
no unmatched RII move may appear.
\end{proof}

\section{Relation to curve and knot invariants}\label{D}

In the proof of Theorem \pr{DnEn} we have used three different homomorphisms $g,f,e:\G_\Z \to \Z$ in order to analyze the 
element $v$. $g$ and $e$ were ad hoc choices constructed especially for this specific $v$. But as noted,
$f$ is of general interest, $f \circ I_{lk}$ being the writhe of the diagram. Another such substitution 
is $k(X_n)=1,k(Y_n)=1$ for all $n$. The resulting diagram invariant $k \circ I_{lk}$ is then the crossing number of the diagram.
In this section we are interested in the substitution $h:\G_\Z \to \Z$ given by $h(X_n)=-n, h(Y_n)=n$ for all $n$.
For a knot diagram $D$ we denote $H(D) = h \circ I_{lk}(D)$. This invariant of knot diagrams 
appears in \cite{PolyakViro01} and in \cite{Hayashi} (with opposite sign) where it is named the \emph{cowrithe}. 
We now study the properties of $H$, and understand its limitations in establishing lower bounds 
for the number of Reidemeister moves between knot diagrams. We will see that $H$ may be presented
as $H=G_1 + G_2$ where $G_1$ depends only on the knot type of $D$, and $G_2$ depends only on the  underlying spherical 
curve of $D$. 
Now, if we are interested in the number of Reidemeister moves between two diagrams $D,E$, 
then all diagrams considered are of the same knot type, so for the sake of this analysis $G_1$ is a constant and we are 
left with $G_2$ which is only an invariant of the underlying spherical curves. 
So any lower bound obtained from $H$ to the number of Reidemeister moves required to pass between $D$ and $E$ 
is in fact a lower bound to the number of moves on curves required to pass between the 
underlying curves of $D$ and $E$.   
For example, for the pair $D_n, E_n$ of Section \pr{C}, $H(D_n)=H(E_n)$ since they are each diagrams of the same knot 
type and have the same underlying curve, so $H$ gives 0 as lower bound.

The invariants $G_1, G_2$ referred to, are well known invariants. $G_1$ is $-4$ times the coefficient of $x^2$ in the 
Conway polynomial, and $G_2$ is Arnold's invariant of spherical curves $St+  J^+/2$, normalized  to have value 0 on the 
two curves appearing in Figure~\ref{normalize}, the circle and the figure eight  curve. (These are representatives of the
two regular homotopy classes of immersions of $S^1$ into $S^2$.) We denote by $A$ the invariant $St+   J^+/2$ with this 
normalization.
We mention that Arnold originally considered curves in the plane in \cite{Arnold94}, 
but then observed that his invariants may  
be defined for curves in $S^2$. Indeed they are well defined on $S^2$, since they are locally well defined, as in the 
plane,
and since the components of the space of ``normed'' immersions of $S^1$ into $S^2$  defined in \cite{Arnold94} are 
simply connected.

Let $\widehat{D}$ denote the underlying spherical curve of a knot diagram $D$, 
and let $c_2(D)$ denote the coefficient of $x^2$ in the 
Conway polynomial of the knot represented by $D$. 
Then the relation mentioned is 
$$H(D) = -4c_2(D) + A(\widehat{D}).$$ 
This relation follows from \cite{Pol}, as we will show. (Various similar observations appear in \cite{PolyakViro01}.) 
We then give a self contained proof of this relation, which demonstrates the special properties of 
$H$ and $A$ and their relation to the properties of $I_{lk}$.

We first show that $H$ indeed coincides (up to sign) with the cowrithe defined in \cite{Hayashi}.
To a knot diagram $D$ there is attached a chord diagram, where we connect two points of the 
domain $S^1 \subset \R^2$ by a straight chord in $\R^2$, if they are mapped into the same point in the diagram $D$. 
For two crossings $a,b$ 
in $D$, $b$ is a crossing between the two distinct components of $D^a$ precisely 
when the chords corresponding to $a$ and $b$ cross in the chord diagram.
We denote this (symmetric) relation between $a$ and $b$ by $x(a,b)$.
So $b$ contributes $\pm 1/2$ to $lk(D^a)$ according to the sign of $b$ which we denote $sgn(b)$,
precisely when $x(a,b)$.
It follows that 
$$lk(D^a) = \frac{1}{2}  \sum_{\{b:x(a,b)\}} sgn(b).$$
From this it follows that $$H(D) = \sum_{ \{ \{a,b\} :x(a,b) \}} -sgn(a)sgn(b)$$
(Note that $\{a,b\}$ appearing under the summation sign is a set, not an ordered pair.)
We see that indeed $H$ is minus the cowrithe.

From these formulas for $lk(D^a)$ and $H(D)$ we can see how $H$ changes under a crossing change.
Let $D_{a+},D_{a-}$ be two diagrams
differing only by a crossing change at $a$, the crossing at $a$ for $D_{a+}, D_{a-}$ being positive 
and negative, respectively, and let $D^a$ be the common smoothing of $D_{a\pm}$ at $a$.
Then the following is clear from the above two formulas: 

\begin{proposition}\label{skein}
$H(D_{a+}) - H(D_{a-}) = -4lk(D^a)$
\end{proposition}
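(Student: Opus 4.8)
The plan is to read off the result directly from the two summation formulas for $lk(D^a)$ and $H(D)$ established just above, exploiting the crucial fact that a crossing change at $a$ alters neither the underlying spherical curve nor the signs of any crossing other than $a$. First I would observe that the chord diagram attached to a knot diagram depends only on the underlying immersed curve $\widehat{D}$, and a crossing change at $a$ leaves $\widehat{D}$ unchanged; hence the relation $x(\cdot,\cdot)$ is identical for $D_{a+}$ and $D_{a-}$. Likewise, for every crossing $b \neq a$ the over/under information is untouched, so $sgn(b)$ is the same in both diagrams. The only quantity that flips is $sgn(a)$, which equals $+1$ in $D_{a+}$ and $-1$ in $D_{a-}$.

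Next I would split the sum $H(D) = \sum_{\{\{a,b\}:x(a,b)\}} -sgn(a)sgn(b)$ into those unordered pairs that contain the crossing $a$ and those that do not. Every pair $\{b,c\}$ with $b,c \neq a$ contributes identically to $H(D_{a+})$ and $H(D_{a-})$ by the previous paragraph, so all such terms cancel in the difference $H(D_{a+}) - H(D_{a-})$. The surviving terms are exactly those pairs $\{a,b\}$ with $x(a,b)$, each contributing $-sgn(a)sgn(b)$; since $sgn(a)$ changes from $-1$ to $+1$, the difference of these terms is $-2\sum_{\{b:x(a,b)\}} sgn(b)$. Substituting the identity $\sum_{\{b:x(a,b)\}} sgn(b) = 2\,lk(D^a)$ coming from the first formula then yields $H(D_{a+}) - H(D_{a-}) = -4\,lk(D^a)$, as claimed.

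The computation itself is a short bookkeeping step, so there is no serious analytic obstacle. The only point requiring genuine care---and the one I would state explicitly---is the invariance of the combinatorial data under the crossing change: both the chord-crossing relation $x(a,b)$ and the signs $sgn(b)$ for $b \neq a$ must be certified to be unaffected by switching the over/under strands at $a$. This is geometrically evident, since a crossing change preserves the projected curve and only locally reverses which of the two strands lies above the other at the single crossing $a$, but it is precisely this observation that legitimizes the term-by-term cancellation and isolates the dependence on $sgn(a)$.
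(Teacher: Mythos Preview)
Your argument is correct and is exactly the computation the paper has in mind: it declares the proposition ``clear from the above two formulas,'' and your splitting of the sum over unordered pairs into those containing $a$ and those not, followed by the substitution $\sum_{\{b:x(a,b)\}} sgn(b)=2\,lk(D^a)$, is precisely how one unpacks that claim. The only minor point is that your phrase ``$sgn(a)$ changes from $-1$ to $+1$'' reads slightly backwards relative to the difference $H(D_{a+})-H(D_{a-})$ you are computing, but the arithmetic you actually perform is correct.
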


If an invariant $i$ of diagrams satisfies the skein relation appearing in Proposition \pr{skein}, and in addition
$i(D)=0$ for any diagram of the unknot, then by \ct{k} Chapter III it must coincide with $-4c_2(D)$.
In fact it is enough that $i(D)=0$ for any descending knot diagram.
Define 
$$i(D) = H(D) - A(\widehat{D}).$$
Our goal is to show that $i(D) = -4c_2(D)$.
Since $\widehat{D_{a+}} = \widehat{D_{a-}}$ then by Proposition \pr{skein} we have $i(D_{a+}) - i(D_{a-}) = -4lk(D^a)$. 
It remains to show that $i(D)=0$ for descending diagrams.
This follows directly from \cite{Pol} Corollary 2, if one chooses the basepoint required there
at the initial point of descent. 

We now present a self contained proof that $i$ is a knot invariant, 
by showing that it is invariant under all Reidemeister moves.
The proof is independent 
of \cite{Pol} and of the fact that $c_2$ is a knot invariant. 
Knowing that $i(D)$ is a knot invariant will also reprove that $i(D)=-4c_2(D)$ 
using the above argument, since $i(D)=0$ for the trivial diagram, 
and therefore for any diagram of the unknot.

We first note the values of $h$ on $R$:
$h(X_0)=0$, $h(Y_0)=0$,
$h(X_n+Y_n)=0$, $h(X_n + Y_{n+1}) = 1$, $h(X_n - X_{n+1}) = 1$ and $h(Y_n - Y_{n+1}) = -1$. 
We thus see that $H$ remains unchanged when a diagram changes by an  
RI  move and an unmatched RII move, and
increases by 1 when a diagram changes  by a matched RII move. 
Furthermore, $H$ changes by $\pm 1$ when the  diagram changes  by an RIII move, and 
inspection shows that the change of $1$ or $-1$ precisely coincides with the 
change of $1$ or $-1$ of Arnold's ``strangeness'' invariant $St$ for the underlying curve.
Two cases are indicated in Figure~\ref{lktopbot}. For each case one needs to check 
both possible cyclic orderings of the three strands along $S^1$.
All other cases are obtained from these two by reversing the orientation of the middle strand, and by reversing the 
crossing between the top and bottom strands while adjusting the middle strand accordingly.
So, using the same names for the moves on curves as for the corresponding moves on knot diagrams,
we have shown that the change in $H(D)$ and $A(\widehat{D})$ is the same under moves RII, RIII. 

As to RI moves, the change in $H$ is 0, and we now check the change for $A$. 
We point out that an RI move on spherical curves changes the regular homotopy class of the curve, and so 
asking about the change under an RI move is only meaningful when a specific normalization is chosen for the two regular 
homotopy classes.

\begin{figure}[htbp] 
   \centering
    \includegraphics[width=2in]{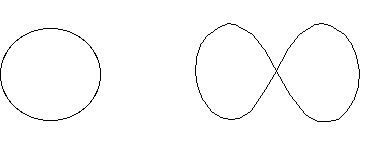} 
   \caption{The invariant $A$ is normalized to have value 0 on these two curves}
   \label{normalize}
\end{figure}

We refer to a small loop in a curve as a {\em kink}.

\begin{proposition}\label{p1}
The invariant $A$ is invariant under a move that slides an arc across a kink, as in Figure~\ref{2moves}.
\end{proposition}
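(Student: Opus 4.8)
The plan is to view the slide as a generic regular homotopy of the underlying spherical curve and to sum the jumps of $A = St + J^+/2$ over the elementary events that occur. The first thing I would record is that sliding an arc past a kink creates and destroys no kinks, so the rotation (Whitney) index of the curve is preserved and the entire slide takes place inside a single regular homotopy class. This is what removes any normalization ambiguity: $St$ and $J^+$ are genuinely defined before, after, and throughout the homotopy, so it suffices to show that the signed total of the jumps of $A$ over the finitely many generic codimension-one events is $0$.

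Next I would invoke Arnold's local jump formulas. In a generic homotopy of curves on $S^2$ the only events are triple points, direct (matched) self-tangencies, and inverse (unmatched) self-tangencies. By the defining property of $St$, $J^+$, $J^-$, a triple point changes $St$ by $\pm 1$ and leaves $J^+$ fixed; a direct self-tangency changes $J^+$ by $\pm 2$ and leaves $St$ fixed; an inverse self-tangency leaves both fixed (only $J^-$ moves). Hence for the combination $A = St + J^+/2$, a triple point contributes $\pm 1$, a direct self-tangency contributes $\pm 1$, and an inverse self-tangency contributes $0$.

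I would then read off from Figure~\ref{2moves} the precise sequence of events making up the slide. As the arc sweeps across the kink it enters the loop region and later leaves it, and possibly passes over the crossing point of the kink. The key structural claim is that the events occur in canceling pairs: the entrance and exit self-tangencies are of the same Arnold type and opposite sign, so their net effect on $J^+/2$ is $0$, and any triple-point passages across the crossing point occur with opposite signs, so their net effect on $St$ is $0$. Thus every event that can move $A$ is matched by a partner of opposite sign, and the signed total vanishes.

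The main obstacle is exactly this sign and type bookkeeping. One must fix orientations of the sliding arc and of the two branches of the kink and then check, for each of the finitely many local pictures (the cyclic orderings of the strands and both signs of the kink, just as in the RIII analysis of Figure~\ref{lktopbot}), that the entrance and exit self-tangencies really are of the \emph{same} type and carry opposite signs, so that their $J^+$ jumps are genuinely $+2$ and $-2$ rather than reinforcing, and likewise that any triple-point contributions cancel in $St$ instead of adding. Once these local signs are computed the cancellation is automatic and gives $\Delta A = 0$, which is the assertion of the proposition.
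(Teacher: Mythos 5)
Your framework is the right one and is essentially what the paper does: decompose the slide into Arnold's elementary events, note that $A=St+J^{+}/2$ jumps by $\pm1$ at triple points and direct self-tangencies and by $0$ at inverse self-tangencies, and verify that the jumps sum to zero (the paper simply exhibits the two local cases in Figure~\ref{2invariant}). But your ``key structural claim'' --- that the entrance and exit self-tangencies are of the same Arnold type and cancel each other in $J^{+}$, while the triple-point passages cancel each other in $St$ --- is false, and the verification you defer would not come out the way you predict. The double point of the kink begins on one side of the sliding arc and ends on the other, so in any generic realization of the move it crosses the arc an \emph{odd} number of times; there is therefore an odd number of triple-point events, and their $\pm1$ contributions to $St$ cannot cancel among themselves. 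In the minimal decomposition the move consists of exactly one triple point and two self-tangencies, and those two self-tangencies are of \emph{opposite} type: at the two points of the (convex) loop where its tangent is parallel to the arc, the loop's orientation points in opposite directions, so one tangency is direct and one is inverse. Thus the inverse tangency contributes $0$ to $A$, and the actual cancellation is a cross-term: the single direct self-tangency contributes $\pm1$ via $J^{+}/2$ and the single triple point contributes $\mp1$ via $St$.

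This is not a cosmetic point. If your pairing were correct, $St$ and $J^{+}$ would each separately be invariant under the kink slide, and the proposition would hold for every combination $aSt+bJ^{+}$; in fact $\Delta St=\pm1$ and $\Delta J^{+}=\mp2$ under this move, and it is precisely the combination $St+J^{+}/2$ that makes the two jumps cancel. So the case check you postpone is where the entire content of the proposition lives, and it must be organized as ``direct tangency versus triple point,'' with the sign comparison carried out for the two essentially distinct local configurations (as in the paper's figure), rather than as the self-cancelling pairs you describe.
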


\begin{figure}[htbp] 
   \centering
    \includegraphics[width=2in]{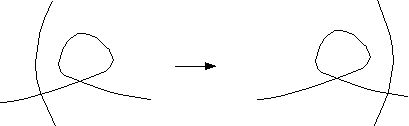} 
   \caption{A   kink slides  across an arc.}
   \label{2moves}
\end{figure}

\begin {proof}
See Figure~\ref{2invariant} for the  two cases to be checked.
\end{proof}

 \begin{figure}[htbp] 
   \centering
\includegraphics[width=4in]{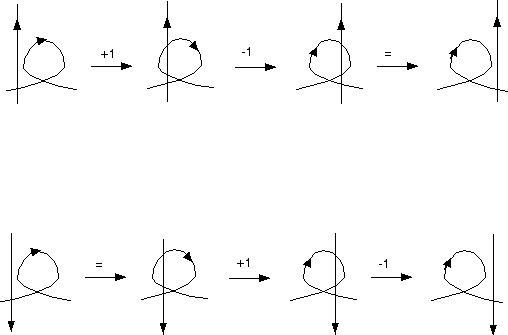} 
   \caption{The value of $A$  is invariant under these moves.}
   \label{2invariant}
\end{figure}

\begin{proposition}\label{p2}
The invariant $A$ is invariant under an RI move, that is, the introduction of a kink,  
the move shown in Figure~\ref{kink}. 
\end{proposition}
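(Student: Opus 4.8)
The plan is to reduce an arbitrary RI move to a single standard model and to evaluate the change of $A$ there. Introducing a kink on an arc of a curve $C$ creates a small loop, through whose interior other strands of $C$ may pass. Each such strand can be pushed out of the loop by the arc-across-kink slide of Figure~\ref{2moves}, and by Proposition~\ref{p1} these slides do not change $A$. After finitely many of them I am left with an \emph{empty} kink: a loop meeting no other strand of the curve and sitting on an otherwise embedded arc. Thus it suffices to prove that introducing an empty kink leaves $A$ unchanged.

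Next I would argue that the increment $\Delta A = \Delta St + \tfrac12\,\Delta J^+$ caused by an empty kink is local, depending only on the way the loop curls and not on the ambient curve $C$. This is because $St$ and $J^+$ are given by local (Gauss-diagram type) expressions, so the contribution of the newly created, isolated crossing to each of them can be read off from the kink alone. Granting this, the value of $\Delta A$ for a given local type of empty kink may be computed on any convenient curve carrying a kink of that type.

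For the computation I would anchor on the two normalizing curves of Figure~\ref{normalize}. The round circle and the figure eight represent the two distinct regular homotopy classes of immersions of $S^1$ into $S^2$ (detected by the parity of the number of double points), and introducing an empty kink on the circle produces a curve with one double point; on $S^2$ this curve is isotopic to the figure eight, the two possible curl directions being interchanged by a rotation of $S^2$. Since $A$ is normalized to vanish on both the circle and the figure eight, we get $\Delta A = 0$ for every empty kink on the circle, hence, by the locality of $\Delta A$, for every empty kink on every curve; combined with the first paragraph this proves invariance of $A$ under RI. The cases can be displayed in a figure as in Figure~\ref{2invariant}.

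The step I expect to be the main obstacle is the locality claim of the second paragraph: one must justify carefully that the change of $A$ under an empty kink really is independent of the rest of $C$ (equivalently, produce the explicit local jump formulas for $St$ and $J^+$ under adding a kink and check that they combine, with the weight $\tfrac12$, to zero), and organize the finitely many local types so that the single circle computation covers them all. Once this bookkeeping is settled, the remaining anchoring calculation is immediate.
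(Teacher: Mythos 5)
Your overall shape is right---anchor the computation on the standard curves of Figure~\ref{normalize} and use Proposition~\ref{p1} to control interactions between the kink and the rest of the curve---but the central step of your argument, the ``locality'' of $\Delta A$, is exactly the part you have not proved, and the route you sketch for filling it does not work as stated. The invariants $St$ and $J^+$ are not given by naively local expressions: they are characterized by their jumps under moves together with a normalization, and the known Gauss-diagram formulas for them (Polyak's, \cite{Pol}) are \emph{based} formulas in which the contribution of a chord depends on global data---its position relative to the base point and signs or indices determined by the whole curve---not just on the local picture near the new double point. So the claim that ``the contribution of the newly created, isolated crossing can be read off from the kink alone'' is an assertion, not an argument; moreover the paper explicitly wants this proof to be independent of the Gauss-diagram machinery. (A side remark: an RI move by definition already creates an empty kink, so your first-paragraph reduction addresses a non-issue; the entire difficulty sits in your second paragraph, as you yourself suspect.)

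The paper fills this gap with a dynamic argument that your proposal is missing. Choose a regular homotopy $F$ carrying $C$ to the circle or the figure eight; it consists only of RII and RIII moves, and the sum of their contributions is $-A(C)$. Now perform the same homotopy on $C'$, dragging the added kink along. The only extra events are slides of strands across the kink as in Figure~\ref{2moves}, which by Proposition~\ref{p1} contribute nothing, so the total contribution is again $-A(C)$. One lands on a standard curve with a kink attached, and the direct check of Figure~\ref{addkink} shows $A$ vanishes there, whence $A(C')=A(C)$. This transport-along-a-homotopy step is the rigorous substitute for your locality claim; without it, or an equivalent explicit verification of the jump formulas for $St$ and $J^+$ under kink addition, your proof is incomplete.
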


\begin{figure}[htbp] 
   \centering
\includegraphics[width=3in]{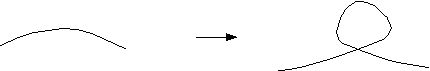} 
   \caption{A kink is created.}
   \label{kink}
\end{figure}

\begin{proof}
One first checks directly that the introduction of a kink to the embedded circle or to the figure 
eight curve leaves $A$ unchanged, as shown in Figure~\ref{addkink}.

\begin{figure}[htbp] 
  \centering
\includegraphics[width=3in]{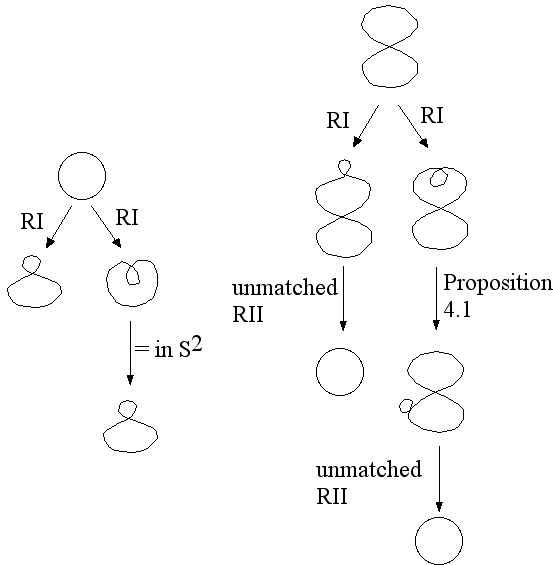} 
   \caption{$A$  is unchanged by the introduction of a kink.}
   \label{addkink}
\end{figure}

Now given a smooth general position curve $C \subset S^2$, there is a regular homotopy $F$ in $S^2$ taking $C$ either 
to the circle or to the figure eight. 
Such regular homotopy involves only RII and RIII moves, and
the sum of contributions of all these moves along $F$ is 
precisely $-A(C)$.
Let $C'$ be a curve obtained from $C$ by an RI move, and apply the same regular homotopy $F$ to $C'$, carrying 
along the added kink. We experience the same moves along $F$ except that occasionally a strand needs to 
pass the additional kink as in Figure~\ref{2moves}. By 
Proposition~\ref{p1} this occurrence does not change the value of $A$, and so the total contribution of all 
moves is again $-A(C)$. We arrive at a curve which is obtained from the embedded circle or figure 
eight by an RI move,
and we have verified that the value of $A$ on such a curve is 0.
We conclude that $A(C')=A(C)$.
\end{proof}

Proposition \pr{p2} establishes that the
change due to an RI move is the same for $H(D)$ and $A(\widehat{D})$. Together with the above similar observation 
regarding RII and RIII moves, we see that $i(D) = H(D) - A(\widehat{D})$ is invariant under all Reidemeister moves,
proving that $i(D)$ is a knot invariant.

\vspace*{.5in}
\begin{tabular}{ll}
{\tt email}: & {\tt hass@math.ucdavis.edu} \\ & {\tt tahl@math.biu.ac.il } \\
~~~ \\
~~~ \\
{\tt address}: & J. Hass \\
& Dept. of Mathematics \\
& Univ. of California, Davis \\
& Davis, CA 95616 \\
~~~ \\
& T. Nowik \\
& Dept. of Mathematics \\
& Bar-Ilan University \\
& Ramat-Gan 52900, Israel \\
 \\
\end{tabular}

\end{document}